\documentclass[oneside,reqno,12pt]{amsart}
\usepackage[greek,english]{babel}
\usepackage{amsthm}
\usepackage{amsbsy}
\usepackage{amsfonts}
\usepackage{graphicx}
\usepackage{hyperref}
\hypersetup{colorlinks=true, citecolor=blue, linkcolor=red}

 \textheight=8.2 true in
   \textwidth=5.7 true in
    \topmargin 30pt
     \setcounter{page}{1}
\newtheorem{thm}{Theorem}
\newtheorem{cor}{Corollary}

\newtheorem{rem}{Remark}

\numberwithin{equation}{section} \numberwithin{lem}{section}
\numberwithin{thm}{section} \numberwithin{cor}{section}
\numberwithin{pro}{section} \numberwithin{rem}{section}

\begin{document}
\title[Painlev\'{e}-II profile of the shadow kink]{Painlev\'{e}-II profile of the shadow kink in the theory of light-matter interaction in nematic liquid crystals}
\author{Christos Sourdis}
\address{Institute of Applied and Computational Mathematics, FORTH, GR–711
10 Heraklion, Greece.
}
\email{sourdis@uoc.gr}

\date{\today}
\begin{abstract}
 We confirm a prediction   that the recently introduced  shadow kink defect in the theory of light-matter interaction in nematic liquid crystals is described to main order by a  solution  of the Painlev\'{e}-II equation which changes sign once in the whole real line. Our result implies  the existence of such a solution to the latter equation which is energy minimizing with respect to compactly supported perturbations.
 \end{abstract}
 \maketitle

\section{Introduction}



Motivated by experiments in  light-matter interaction in nematic liquid crystals, the authors of \cite{clerc2017theory}
considered energy minimizing in $H^1(\mathbb{R})$ solutions of the following singular perturbation problem:
\begin{equation}\label{eqEq}
  \epsilon^2 v''(x)+\mu(x)v(x)-v^3(x)+\epsilon a f(x)=0,\ \ x\in \mathbb{R},
\end{equation}
where $a>0$ is a parameter, and the fixed functions  $\mu, f$ satisfy
\begin{equation}\label{eqAssump}
  \left\{\begin{array}{c}
           \mu \in C^1(\mathbb{R})\cap L^\infty(\mathbb{R})\ \textrm{is\ even}, \ \mu'<0\ \textrm{in}\ (0,\infty),\\
   \mu(\xi)=0\ \textrm{for some}\ \xi>0;          \\ \\
           f\in L^1(\mathbb{R})\cap L^\infty(\mathbb{R})\cap C(\mathbb{R})\ \textrm{is\ odd}, \ f>0\ \textrm{in}\ (0,\infty).
         \end{array}
   \right.
\end{equation}

It was shown that, for any $\epsilon, a>0$,    minimizers $v_\epsilon$ always exist and have a unique zero $\rho_\epsilon$. The main result of \cite{clerc2017theory} can be summarized as follows.   There exist positive numbers $a_*\leq a^*$  (independent of $\epsilon$) that can be characterized variationally as
\begin{equation}\label{eqastar}\begin{array}{l}
                                 a^*=\sup_{x\in [-\xi,0)}\frac{\sqrt{2}\left((\mu(0))^\frac{3}{2}-(\mu(x))^\frac{3}{2} \right)}{3\int_{x}^{0}|f|\mu}<\infty, \\
                                  \\
                                 a_*=\inf_{x\in (-\xi,0]}\frac{\sqrt{2}(\mu(x))^\frac{3}{2} }{3\int_{-\xi}^{x}|f|\mu}\in(0,\infty),
                               \end{array}
\end{equation}
 such that:
\begin{itemize}
\item If $a\in (a^*,\infty)$, then the minimizers $v_\epsilon$ converge pointwise to
$\textrm{sgn}(x)\sqrt{\mu^+(x)}$ as $\epsilon \to 0$ (the convergence being uniform away from the origin, and so $\rho_\epsilon\to 0$). There is a transition layer around the origin of width $O(\epsilon)$, where the behaviour of $v_\epsilon$ is governed by  a usual squeezed hyperbolic tangent profile (see also (\ref{eqHyper}) below).
  \item  If $a\in (0,a_*)$, then  $v_\epsilon$ converges uniformly to
$\sqrt{\mu^+(x)}$  and  $\rho_\epsilon \to -\xi$ as $\epsilon \to 0$ (without loss of generality).
\end{itemize}

In any case, $v_\epsilon$ has steep corner layers of width $O(\epsilon^\frac{2}{3})$ around $\pm \xi$, where the behaviour of $v_\epsilon$ is governed by a suitable blow-down of a minimizing (in the sense of Morse) solution of the Painlev\'{e}-II equation:
\begin{equation}\label{eqPainle}
  y''(s)-sy(s)-2y^3(s)-\alpha=0,\ \ s\in \mathbb{R},
\end{equation}
with
\begin{equation}\label{eqAlpha}
  \alpha=\frac{af(-\xi)}{\sqrt{2}\mu'(-\xi)}<0,
\end{equation}
after appropriate reflections and rescalings of constants. More precisely, assuming that $v_\epsilon\to \sqrt{\mu^+}$ (without loss of generality), it holds
\begin{equation}\label{eqSatur}
  2^{-\frac{1}{2}}\left(\mu'(-\xi)\epsilon \right)^{-\frac{1}{3}}v_\epsilon\left(-\xi-\frac{\epsilon^\frac{2}{3}s}{\left(\mu'(-\xi)\right)^\frac{1}{3}} \right)\to -Y(s)\ \textrm{in}\ C^1_{loc}(\mathbb{R})\ \textrm{as}\ \epsilon\to 0,
\end{equation}
where $Y$ is an energy minimizing solution of (\ref{eqPainle}) with $\alpha$ as in (\ref{eqAlpha}). We clarify that here minimality is understood in the sense of Morse or De Giorgi, that is with respect to compact perturbations. In fact,  such solutions automatically satisfy either the plus or the minus set of asymptotic conditions in (\ref{eqY+}) below (see again \cite{clerc2017theory}).

It is known that for any $\alpha\leq 0$ the equation (\ref{eqPainle}) admits a unique positive solution $Y_+$ which is defined in the whole real line (see \cite{hastings1980boundary} for $\alpha=0$, and \cite{clerc2017theory,fokas2006painleve} for $\alpha<0$). In fact, it holds $Y_+'<0$, which implies that $Y_+$ is linearly nondegenerate, in the sense that the linearization of (\ref{eqPainle}) about $Y_+$ does not have bounded elements in its kernel.
 On the other hand, for $\alpha<0$ in  some    neighborhood of zero, the problem (\ref{eqPainle}) has also a  solution $Y_-$ which changes sign once \cite{troy2018role} (we refer to the discussion following Corollary \ref{cor3} for more details and references).
 We point out that it holds
\begin{equation}\label{eqY+}
  Y_\pm(s)\sim \pm \sqrt{-\frac{s}{2}}+O\left(\frac{1}{s}\right)\ \textrm{as}\ s\to -\infty;\ \ Y_\pm(s)\sim \frac{|\alpha|}{s}\ \textrm{as}\ s\to +\infty.
\end{equation}

A natural question that arises  is   whether the minimizing solution  $Y$ of (\ref{eqPainle}) that appears in (\ref{eqSatur}) is $Y_+$ or   a solution that changes sign exactly once (recall that $v_\epsilon$ changes sign exactly once but its zero $\rho$ could escape at infinity in the blow-up (\ref{eqSatur})). 
In this paper we will show that it is indeed the second, and more interesting, scenario that occurs. Thus, we describe the microstructure of $v_\epsilon$ near $-\xi$ (at least in an inner zone of width $O(\epsilon^\frac{2}{3})$). We confirm the prediction that the microscopic phase transition that takes place near $-\xi$ is   a new type  of defect that does not involve the standard hyperbolic tangent. The fact that this is an open problem was emphasized at the end of the introduction in \cite{clerc2018gradient}.

Our main result is the following.
\begin{thm}\label{thm1}
If in addition to (\ref{eqAssump}) we assume that $\mu,\ f\in C^2$ near $-\xi$, then
the minimizing solution  $Y$ of (\ref{eqPainle}) that appears in (\ref{eqSatur})   changes sign exactly once, provided that $a\in (0,a_*)$ with $a_*$ as in (\ref{eqastar}).
\end{thm}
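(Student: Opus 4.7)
The plan is to argue by contradiction, assuming that the minimizing Painlev\'e-II profile $Y$ in (\ref{eqSatur}) is $Y_+$; since $Y$ must satisfy one of the two asymptotic conditions of (\ref{eqY+}), this is the only alternative to $Y_-$. Let $u_\epsilon(s)$ denote the rescaled function on the left-hand side of (\ref{eqSatur}), so that $u_\epsilon\to -Y$ in $C^1_{loc}$, and set $s_\epsilon:=-(\rho_\epsilon+\xi)(\mu'(-\xi))^{1/3}\epsilon^{-2/3}$, the unique zero of $u_\epsilon$. Because $v_\epsilon\leq 0$ on $(-\infty,\rho_\epsilon]$ and $v_\epsilon\geq 0$ on $[\rho_\epsilon,\infty)$, we have $u_\epsilon(s)\geq 0$ for $s\leq s_\epsilon$ and $u_\epsilon(s)\leq 0$ for $s\geq s_\epsilon$. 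Along any subsequence with $s_\epsilon\to s_*\in[-\infty,+\infty]$: if $s_*=+\infty$ then $Y(s)\leq 0$ for every $s$, contradicting the asymptotic $Y(s)\sim|\alpha|/s>0$ as $s\to+\infty$; if $s_*\in\mathbb R$ then $Y(s_*)=0$, forcing $Y=Y_-$, which is the desired conclusion. Hence the contradiction hypothesis reduces to $s_\epsilon\to-\infty$, i.e.\ $\rho_\epsilon>-\xi$ and $L_\epsilon:=\rho_\epsilon+\xi\gg \epsilon^{2/3}$.

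In this scenario, a standard outer analysis of (\ref{eqEq}) implies that, away from an $O(\epsilon^{2/3})$ Painlev\'e corner layer at $-\xi$ and an $O(\epsilon/\sqrt{\mu(\rho_\epsilon)})$ hyperbolic-tangent transition layer near $\rho_\epsilon$, the minimizer $v_\epsilon$ sits on the stable negative branch $-\sqrt{\mu(x)}$ of the reduced equation $\mu v-v^3+\epsilon af=0$ throughout $(-\xi,\rho_\epsilon)$. I would then construct an $H^1$ competitor $\tilde v_\epsilon$ that agrees with $v_\epsilon$ outside a small neighborhood of $[-\xi,\rho_\epsilon]$, takes the positive outer value $+\sqrt{\mu^+(x)}$ on the bulk of $(-\xi,\rho_\epsilon)$, and matches to it via the rescaled $-Y_-$ profile in the Painlev\'e inner layer at $-\xi$, all patched smoothly across transition zones of scale $\epsilon^{2/3}$. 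In the energy comparison, the shared regions cancel, the change of the Painlev\'e corner energy at $-\xi$ (from a $Y_+$-type to a $-Y_-$-type profile) is $O(\epsilon^2)$, the bulk gain on $(-\xi,\rho_\epsilon)$ from switching to the favorable sign of the forcing is $-2\epsilon a\int_{-\xi}^{\rho_\epsilon}f\sqrt{\mu}\,dx>0$, and the removal of the hyperbolic-tangent kink at $\rho_\epsilon$ saves the standard kink energy $\frac{2\sqrt 2}{3}\mu(\rho_\epsilon)^{3/2}\epsilon$. Combining these contributions via Taylor expansions of $\mu$ and $f$ near $-\xi$ produces a lower bound of the form
\[
E(v_\epsilon)-E(\tilde v_\epsilon)\geq c\,\epsilon\,L_\epsilon^{3/2}+o(\epsilon L_\epsilon^{3/2}),
\]
with a positive constant $c=c(a,\mu,f)$ whose positivity for $a\in(0,a_*)$ is precisely captured by the variational characterization of $a_*$ in (\ref{eqastar}). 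This strict positivity contradicts the minimality of $v_\epsilon$ and concludes the proof.

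The main obstacle is the careful design of the trial function $\tilde v_\epsilon$ and the precise energy accounting to the order $o(\epsilon L_\epsilon^{3/2})$: in particular, one must control the Painlev\'e inner-layer energy change between the $Y_+$ and $-Y_-$ configurations to $O(\epsilon^2)$ and verify that this is absorbed by the error term under the contradiction regime $L_\epsilon\gg\epsilon^{2/3}$. A secondary delicate point is confirming that $v_\epsilon$ indeed lies near the stable negative branch $-\sqrt{\mu(x)}$ (rather than near the unstable zero branch) on the outer bulk of $(-\xi,\rho_\epsilon)$; fortunately, should it instead hover closer to zero, the savings from switching to $+\sqrt{\mu^+(x)}$ would only become larger, so the contradiction is reinforced either way.
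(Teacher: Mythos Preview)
Your energy–competitor strategy is a natural idea, but the bulk accounting is off by a sign, and once that is fixed the argument no longer covers the full range $a\in(0,a_*)$ asserted in the theorem. On $(-\xi,0)$ the forcing $f$ is \emph{negative}, so the linear term $-\epsilon a f v$ in the energy \emph{favours} the negative branch $-\sqrt{\mu}$; switching to $+\sqrt{\mu}$ on $(-\xi,\rho_\epsilon)$ therefore \emph{costs} bulk energy. Concretely,
\[
E(v_\epsilon)-E(\tilde v_\epsilon)\ \approx\ \frac{2\sqrt2}{3}\,\epsilon\,\mu(\rho_\epsilon)^{3/2}\;+\;2\epsilon a\!\int_{-\xi}^{\rho_\epsilon}\! f\sqrt{\mu}\,dx\;+\;O(\epsilon^2),
\]
and the second term is \emph{negative}. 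Taylor–expanding near $-\xi$ gives a leading balance
\[
E(v_\epsilon)-E(\tilde v_\epsilon)=\tfrac{4}{3}\epsilon\sqrt{\mu'(-\xi)}\,L_\epsilon^{3/2}\Bigl[\tfrac{\mu'(-\xi)}{\sqrt2}+a f(-\xi)\Bigr]+o(\epsilon L_\epsilon^{3/2}),
\]
so the bracket is positive precisely when $a<\mu'(-\xi)/(\sqrt2\,|f(-\xi)|)$, i.e.\ when $\alpha>-\tfrac12$ via (\ref{eqAlpha}). This is \emph{not} the variational quantity $a_*$ of (\ref{eqastar}) (note the denominator there involves $|f|\mu$, not $|f|\sqrt{\mu}$), and one can easily build $\mu,f$ with $a_*>\mu'(-\xi)/(\sqrt2|f(-\xi)|)$ (e.g.\ $\mu=1-x^2$, $f(x)=x$ gives $a_*=\tfrac{4\sqrt2}{3}>\sqrt2$). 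For $a$ in that gap your competitor has strictly more energy and no contradiction ensues.

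By contrast, the paper avoids any sign comparison. It introduces the auxiliary negative solution $\eta_\epsilon$, writes $v_\epsilon=\eta_\epsilon w$ (the Lassoued--Mironescu division trick), and shows that in the contradiction regime $w$ develops a hyperbolic–tangent layer at $\rho_\epsilon$ in the stretched variable $y=(x-\rho_\epsilon)\sqrt{\rho_\epsilon+\xi}/\epsilon$. Integrating the associated Hamiltonian/Pohozaev identity (equation multiplied by $\dot W$) over a carefully chosen window yields, after passing to the limit, the \emph{equality}
\[
\frac{3}{2}\int_{\mathbb R}(\dot W_0)^2\,dy+2a\,\frac{f(-\xi)}{\sqrt{\mu'(-\xi)}}=0,
\]
which pins down the single resonant value $a=-\tfrac{3}{2\sqrt2}\,\mu'(-\xi)/f(-\xi)$; a refinement of the same identity then excludes even this value. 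Because the argument produces an identity rather than a sign, it works for \emph{every} $a\in(0,a_*)$, which is exactly what Theorem~\ref{thm1} claims. Your approach, if the sign is corrected, would give an alternative proof only under the extra restriction $\alpha\in(-\tfrac12,0)$; that is enough for Corollary~\ref{cor3} but not for Theorem~\ref{thm1} in its stated generality.
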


Our result  implies   the following.

\begin{cor}\label{cor3}
  If $\alpha\in  (-\frac{1}{2},0)$,  the   problem (\ref{eqPainle}) admits  a   solution $Y_-$ with exactly one sign change that is energy minimizing with respect to compactly supported perturbations. Moreover, the solution $Y_-$ satisfies the corresponding conditions in (\ref{eqY+}).
\end{cor}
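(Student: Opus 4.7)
The plan is to derive the corollary directly from Theorem~\ref{thm1} by realizing every target value $\alpha_0\in(-1/2,0)$ as the parameter in (\ref{eqAlpha}) arising from a suitable admissible choice of $(\mu,f,a)$. Since $a_*$ depends only on $(\mu,f)$ while $\alpha$ is linear in $a$, the task reduces to exhibiting one pair $(\mu,f)$ satisfying (\ref{eqAssump}) and the $C^2$ hypothesis near $-\xi$ for which the corresponding range of $\alpha$ as $a$ traverses $(0,a_*)$ covers $(-1/2,0)$; the minimizing Painlev\'e-II profile furnished by Theorem~\ref{thm1} then automatically supplies the sought $Y_-$.

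I would work with the explicit model $\xi=1$, $\mu(x)=1-x^2$ on $[-1,1]$ extended smoothly to an even element of $C^1(\mathbb{R})\cap L^\infty(\mathbb{R})$ with $\mu'<0$ on $(0,\infty)$, and $f(x)=x$ on $[-1,1]$ extended continuously to an odd function in $L^1(\mathbb{R})\cap L^\infty(\mathbb{R})\cap C(\mathbb{R})$ that is positive on $(0,\infty)$. Both are smooth near $-1$, so the $C^2$ hypothesis of Theorem~\ref{thm1} holds. A short computation yields $\int_{-1}^{x}|f|\mu\,dt=\frac{1}{4}(1-x^2)^2$ for $x\in(-1,0]$, whence
\begin{equation*}
\frac{\sqrt{2}\,(\mu(x))^{3/2}}{3\int_{-1}^{x}|f|\mu}=\frac{4\sqrt{2}}{3\sqrt{1-x^2}},
\end{equation*}
which is minimized over $(-1,0]$ at $x=0$, giving $a_*=4\sqrt{2}/3$. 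Since $\mu'(-1)=2$ and $f(-1)=-1$, formula (\ref{eqAlpha}) reduces to $\alpha=-a/(2\sqrt{2})$, so the admissible range of $\alpha$ as $a$ traverses $(0,a_*)$ is the interval $(-2/3,0)$, which strictly contains $(-1/2,0)$.

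Given $\alpha_0\in(-1/2,0)$, I set $a:=-2\sqrt{2}\,\alpha_0\in(0,\sqrt{2})\subset(0,a_*)$; all hypotheses of Theorem~\ref{thm1} are then met with this $(\mu,f,a)$ and with $\alpha=\alpha_0$. The resulting minimizing solution $Y$ in the limit (\ref{eqSatur}) solves (\ref{eqPainle}) with $\alpha=\alpha_0$, is minimizing with respect to compactly supported perturbations, and changes sign exactly once by Theorem~\ref{thm1}. Since $Y$ is additionally known to satisfy either the plus or the minus set of asymptotic conditions in (\ref{eqY+}) (as recalled after (\ref{eqSatur})) and the plus set forces $Y>0$ throughout, the single sign change pins $Y$ to the $Y_-$ asymptotics; thus $Y=Y_-$ is the desired energy-minimizing profile with a unique zero, satisfying the prescribed asymptotics in (\ref{eqY+}).

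The argument is essentially bookkeeping once Theorem~\ref{thm1} is available, so I expect no substantial obstacle. The only points that require care are (i) performing the extensions of $\mu$ and $f$ beyond $[-1,1]$ without altering the formulas on $[-1,0]$ that feed into the computations of $a_*$ and of $\alpha$, and (ii) invoking the dichotomy for energy-minimizing solutions of (\ref{eqPainle}) noted in the introduction in order to force the identification $Y=Y_-$, ruling out a hypothetical sign-changing minimizer with plus-type asymptotics.
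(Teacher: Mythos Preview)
Your proof is correct and takes essentially the same approach as the paper's: realize each target $\alpha_0\in(-\tfrac12,0)$ via (\ref{eqAlpha}) by exhibiting an admissible pair $(\mu,f)$ whose threshold $a_*$ is large enough, then invoke Theorem~\ref{thm1}. Your concrete choice $\mu=1-x^2$, $f=x$ is in fact a specific instance of the paper's choice $f=-\mu'/2$, which is the sole ingredient of its argument; your identification of the asymptotics via the observation that the plus branch is incompatible with a single sign change matches the paper's appeal to \cite[Sec.~4]{clerc2017theory}.
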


Let us compare our above result with some related recent ones in the literature. In \cite{claeys2008multi,fokas2006painleve}, by solving the corresponding Riemann-Hilbert
problem, it was shown that there exists a unique solution of (\ref{eqPainle})-(\ref{eqY+})$_-$ for $\alpha \in (-\frac{1}{2},0)$. Moreover, it was shown in the latter reference that this solution
is a meromorphic function with no poles on the real line. More recently, the existence of a solution to (\ref{eqPainle})-(\ref{eqY+})$_-$ with exactly one sign change
 was shown by a shooting argument in \cite{troy2018role} for $\alpha \in (\alpha_*,0)$, where $ \alpha_*<0$ is some    number that is sufficiently close to zero. The approach in the latter reference provides a qualitative description of the obtained solution.  In particular, it changes monotonicity
only once. It is worth mentioning that the existence and uniqueness of such a nonmonotone  solution  for $\alpha\in (-\frac{1}{2},0)$ were indicated in
\cite{fornberg2014computational}. Interestingly enough, it was also indicated therein that there are no such solutions for $\alpha=-\frac{1}{2}$.

In relation to the aforementioned papers, where different approaches are employed, our main contribution  is that we associated to the   solution  of (\ref{eqPainle})-(\ref{eqY+})
the strong property of energy minimality. In general, in problems of the calculus of variations, the standard method for showing this property is by constructing a calibration. However,  it is not clear to us how to apply this considerably  more direct approach to the problem at hand. Nevertheless, we note in passing that this is indeed possible for  $Y_+$, based on the fact that it is sign definite. Actually, it follows from our proof that the validity of Corollary \ref{cor3} holds for $\alpha\in (\alpha_*,0)$, where $\alpha_*$ is as in the left hand side of (\ref{eqastarInf}). In this regard, we point out that it was mentioned at the end of the introduction of \cite{clerc2017theory} that numerical evidence suggests that the range of
$\alpha$ in Corollary \ref{cor3} should be larger. Lastly, we should mention that by the B\"{a}cklund transformation (see for instance \cite{fokas2006painleve})
our result implies the existence of a solution to (\ref{eqPainle})-(\ref{eqY+})$_-$ for $\alpha \in (-k-\frac{1}{2},-k)$, $k\in \mathbb{N}$.

Our proof of Theorem \ref{thm1} goes by contradiction. Assuming that the assertion  is false puts us sufficiently far from $-\xi$, in some sense, which allows us to    compare $v_\varepsilon$ to   a sign definite solution $\eta_\epsilon$ of (\ref{eqEq}) by  examining
the quotient \[w_\epsilon=\frac{v_\epsilon}{\eta_\epsilon}.\]  This division trick was introduced by \cite{lassoued1999ginzburg} in a different context and has been used extensively in the study of
vortices in Bose-Einstein condensates (see \cite{ANS} and the references therein); the simplest case being (\ref{eqEq}) with $a=0$ and $-\mu$ being a trapping potential. In this regard, we point out that the connection between (\ref{eqEq}) and the aforementioned studies was already discussed in \cite{clerc2017theory}. However, to the best of our knowledge, this important division trick is applied here for the first time in this context. Thankfully, the required estimates for $\eta$ and its derivatives are readily derivable from \cite{karali2015ground} or   \cite{schecter2010heteroclinic}. These rely on the fact that the positive solution $Y_+$ of (\ref{eqPainle}) is linearly nondegenerate. The   quotient $w$ satisfies a weighted Allen-Cahn equation with a weight that degenerates at $-\xi$ (see (\ref{eqWstr}) and the discussion leading to (\ref{eqEepsil})).
However, as we remarked, we will be working
sufficiently away from this degeneracy. It then  boils down to
showing that $w$  cannot have a sharp transition layer from $1$ to $-1$ at $\rho$.
For this purpose, there are several approaches in the literature for related problems of Allen-Cahn type. For example see
  \cite[Thm. 1.5]{ambrosetti2003singularly} for a nonlinear Schr\"{o}dinger equation with a potential,
 \cite[Thm 4.1]{alikakos1988singular} for a phase field model of phase transitions, and  \cite[Thm. 1]{nakashima2003multi} for a spatially inhomogeneous Allen-Cahn equation.
On the one hand, the equation for $w$ resembles more the aforementioned phase field model. On the other hand, armed with our estimates for $w$,
we found it more convenient to adapt the corresponding proof of \cite{ambrosetti2003singularly} with our own twist.


We close this introduction by expressing our hope that some of our arguments can be extended to describe   the core of  the 'shadow vortex' defect in the recent paper \cite{clerc2018symmetry} or the 'shadow domain wall' in \cite{clerc2018gradient}.

In the rest of the paper we will prove our main results. Following the proof of Theorem \ref{thm2}, which implies Theorem \ref{thm1},  we will present in Remark \ref{remFormal} an applied mathematicians point of view which partly motivated our rigorous analysis.

\section{Proof of the main results}\subsection{Proof of Theorem \ref{thm1}}

In order to prove Theorem \ref{thm1}, as was already observed at the end of Section 3 of \cite{clerc2017theory}, it suffices to establish the following (recall also the preamble to the aforementioned theorem).

\begin{thm}\label{thm2} Let $\mu,f$ satisfy (\ref{eqAssump}) with $\mu,\ f\in C^2$ near $-\xi$.
  Then, if   $a$ is as in Theorem \ref{thm1}, the unique root $\rho_\epsilon$ of    $v_\epsilon$  satisfies
  \[
  \rho_\epsilon+\xi=O(\epsilon^\frac{2}{3})\ \ \textrm{as}\ \epsilon\to 0.
  \]
\end{thm}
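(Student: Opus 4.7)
The plan is a proof by contradiction. Assume that along some subsequence $\epsilon_n\to 0^+$ (still denoted $\epsilon$), $|\rho_\epsilon+\xi|/\epsilon^{2/3}\to\infty$. The uniform convergence $v_\epsilon\to\sqrt{\mu^+}$ from \cite{clerc2017theory}, combined with the reduced relation $v_\epsilon\approx\epsilon af/\mu$ in the outer region $\{\mu<0\}$ (which forces $v_\epsilon$ to have the sign of $f$ there), rules out $\rho_\epsilon<-\xi$ with $|\rho_\epsilon+\xi|\gg\epsilon^{2/3}$. So without loss of generality $\rho_\epsilon+\xi\gg\epsilon^{2/3}$, which places the zero of $v_\epsilon$ well to the right of the Painlev\'{e}-II corner layer at $-\xi$, inside the effective well $\{\mu>0\}$.

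The contradiction will come from comparing $v_\epsilon$ with a positive sign-definite solution $\eta_\epsilon>0$ of (\ref{eqEq}). First I would construct such an $\eta_\epsilon$: it is close to $\sqrt{\mu^+(x)}$ outside $O(\epsilon^{2/3})$-neighborhoods of $\pm\xi$, and described by a rescaling of the positive Painlev\'{e}-II solution $Y_+$ (with $\alpha$ as in (\ref{eqAlpha})) inside those neighborhoods. Its existence and the uniform $C^1$ pointwise estimates needed below -- notably $\eta_\epsilon(x)\geq c\sqrt{\mu^+(x)}$ in the bulk and $\eta_\epsilon\geq c\epsilon^{1/3}$ on the corner layers -- are essentially contained in the Lyapunov-Schmidt type analyses of \cite{karali2015ground} or \cite{schecter2010heteroclinic}, whose key nondegeneracy hypothesis is the linear nondegeneracy of $Y_+$.

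With $\eta_\epsilon$ in hand, the division trick of \cite{lassoued1999ginzburg}, $w_\epsilon:=v_\epsilon/\eta_\epsilon$, combined with the ODEs for $v_\epsilon$ and $\eta_\epsilon$, produces the energy identity
\[
E(v_\epsilon)-E(\eta_\epsilon)=F_\epsilon(w_\epsilon):=\int_{\mathbb{R}}\left[\tfrac{\epsilon^2}{2}\eta_\epsilon^2(w_\epsilon')^2+\tfrac{\eta_\epsilon^4}{4}(1-w_\epsilon^2)^2+\tfrac{\epsilon a f\eta_\epsilon}{2}(w_\epsilon-1)^2\right]dx.
\]
Since the map $w\mapsto w\eta_\epsilon$ is a bijection between admissible classes and $v_\epsilon$ minimizes $E$, $w_\epsilon$ minimizes $F_\epsilon$: $F_\epsilon(w_\epsilon)\leq F_\epsilon(\tilde w)$ for every admissible competitor $\tilde w$ with $\tilde w-1$ compactly supported. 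The Euler-Lagrange equation for $w_\epsilon$ is a weighted Allen-Cahn equation whose weight is uniformly bounded below away from $\pm\xi$, hence uniformly nondegenerate in the region where we have placed $\rho_\epsilon$.

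In this nondegenerate region, $w_\epsilon$ must perform a single sharp transition from $+1$ to $-1$ across $\rho_\epsilon$, of width $O(\epsilon/\eta_\epsilon(\rho_\epsilon))$, and the Allen-Cahn part of $F_\epsilon(w_\epsilon)$ is asymptotically $(1+o(1))\tfrac{2\sqrt{2}}{3}\epsilon\eta_\epsilon^3(\rho_\epsilon)$, i.e. the classical tanh-kink energy of amplitude $\eta_\epsilon(\rho_\epsilon)$. Establishing this sharp-transition description is the core analytic step; I would carry it out by adapting the argument of \cite[Thm.~1.5]{ambrosetti2003singularly} with modifications for the linear-in-$w$ correction coming from the $af$ term. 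Testing $F_\epsilon(w_\epsilon)\leq F_\epsilon(\tilde w)$ against a competitor $\tilde w$ whose sign flip is shifted into the Painlev\'{e} zone at $-\xi$ -- where $\eta_\epsilon=O(\epsilon^{1/3})$ so that the analogous kink cost drops to $O(\epsilon^2)$ -- and then exploiting the bulk expansion $\eta_\epsilon\approx\sqrt{\mu^+}$, one obtains a leading-order inequality that, combined with the characterization (\ref{eqastar}) of $a_*$, is inconsistent with $a\in(0,a_*)$. The main obstacle is precisely this sharp-transition step in the weighted setting, together with the careful energy bookkeeping required to match the leading-order constants with those appearing in (\ref{eqastar}).
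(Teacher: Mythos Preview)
Your outline shares the paper's starting point (the Lassoued--Mironescu division $w_\epsilon=v_\epsilon/\eta_\epsilon$ and a blow-up around $\rho_\epsilon$ leading to a tanh profile), but from there the paper proceeds by a Pohozaev-type identity (multiply the blown-up equation by $\dot W$ and integrate), not by an energy comparison. Two points in your proposal need attention.

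\textbf{The sign of $\eta_\epsilon$.} A \emph{positive} solution of (\ref{eqEq}) close to $\sqrt{\mu^+}$ on $(-\infty,0]$ with $\eta_\epsilon\to 0$ at $-\infty$ does not exist: for $x<-\xi$ one has $\mu<0$, $f<0$, so the unique small algebraic root of $\mu\nu-\nu^3+\epsilon af=0$ is negative, and any positive $\eta$ satisfies $\epsilon^2\eta''\ge \epsilon a|f|$, which is incompatible with $\eta\to 0$. The paper takes $\eta_\epsilon<0$ on the half-line with $\eta_\epsilon'(0)=0$; your Lassoued--Mironescu identity is then correct with $f\eta>0$ on $(-\infty,0)$, but the admissible class for $w$ must carry the boundary value $w(0)=v_\epsilon(0)/\eta_\epsilon(0)\approx -1$, not the condition ``$\tilde w-1$ compactly supported''. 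This is fixable, but it changes the set-up you describe.

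\textbf{The final contradiction.} This is the genuine gap. Comparing $F_\epsilon(w_\epsilon)$ with a competitor whose kink sits in the Painlev\'e zone gives, to leading order,
\[
\frac{2\sqrt{2}}{3}(\mu'(-\xi))^{3/2}(\rho_\epsilon+\xi)^{3/2}\ \le\ 2a\int_{-\xi}^{\rho_\epsilon}|f|\sqrt{\mu}\ \sim\ \frac{4a}{3}|f(-\xi)|\sqrt{\mu'(-\xi)}\,(\rho_\epsilon+\xi)^{3/2},
\]
hence only the \emph{local} constraint $a\ge \mu'(-\xi)/(\sqrt 2\,|f(-\xi)|)$. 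This has nothing to do with the variational formula (\ref{eqastar}) for $a_*$, and in general does \emph{not} contradict $a<a_*$: e.g.\ for $\mu(x)=1-x^2$, $f(x)=x^3$ one has $\mu'(-\xi)/(\sqrt2|f(-\xi)|)=\sqrt2$ while $a_*$ is substantially larger. The paper runs into the same local obstruction: its Pohozaev computation forces $a$ to equal a single explicit value depending only on $\mu'(-\xi)$ and $f(-\xi)$ (see (\ref{eqaa})). The crucial extra step---absent from your outline---is to rule out that exceptional value by going to the next order (the paper does this by estimating the term $\int (f\eta'/\eta^2)(1-W)^2$ from below over a symmetric window, producing a nonzero contribution that cannot be balanced). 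Without an analogue of this refinement, your comparison argument does not close for all $a\in(0,a_*)$.
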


\begin{proof}
Throughout the proof, we will denote by $C/c>0$ a large/small generic constant that is independent of small $\epsilon>0$, and whose value will increase/decrease as we proceed.

Firstly, we
will show that
\begin{equation}\label{eqBaggio}
  \rho_\epsilon\geq -\xi-C\epsilon^\frac{2}{3}.
\end{equation}
To this end, let us consider the algebraic equation that comes from (\ref{eqEq}) when we ignore the term $\epsilon^2v''$:
\begin{equation}\label{eqnueps}
\mu(x)\nu-\nu^3 +\epsilon a f(x)=0.
\end{equation}
Based on the fact that $\mu'(-\xi)>0$, it is straightforward to verify that there exists a large $R>0$ such that the following properties hold. The above  equation admits a unique solution $\nu_\epsilon$ for $x\leq -\xi-R\epsilon^\frac{2}{3}$, provided that $\epsilon>0$ is sufficiently small.
In fact, it holds
\begin{equation}\label{eqLast0}
  \nu_\epsilon(x)=-\epsilon a \frac{f(x)}{\mu(x)}+O\left(\frac{\epsilon^3}{|x+\xi|^4} \right)<0,
\end{equation}
uniformly on $[-\xi-d,-\xi-R\epsilon^\frac{2}{3}]$ for some small $d>0$ (such that $f\in C^2(-\xi-2d,-\xi+2d)$), as $\epsilon \to 0$, and
\[
\left|\nu_\epsilon'(x) \right|\leq C\frac{\epsilon}{|x+\xi|^2},\ \   \left|\nu_\epsilon''(x) \right|\leq C\frac{\epsilon}{|x+\xi|^3},\ \ x\in[-\xi-d,-\xi-R\epsilon^\frac{2}{3}].
\]
Let
\begin{equation}\label{eqLastDef}
  \varphi_\epsilon(x)=v_\epsilon(x)-\nu_\epsilon(x),\ \ x\in[-\xi-d,-\xi-R\epsilon^\frac{2}{3}].
\end{equation}
It follows readily that $\varphi_\epsilon$ satisfies the following linear equation:
\begin{equation}\label{eqLast1}
-\epsilon^2\varphi''+Q(x)\varphi=O\left(\frac{\epsilon^3}{|x+\xi|^3} \right),
\end{equation}
with
\begin{equation}\label{eqLast2}\begin{array}{ccc}
                                 Q(x) & := & \frac{v_\epsilon^3(x)-\mu(x)v_\epsilon(x)-a\epsilon f(x)}{v_\epsilon(x)-\nu_\epsilon(x)} \\
                                   &   &   \\
                                   & =  &  \varphi^2+3\nu_\epsilon^2+3\nu_\epsilon \varphi-\mu \\
                                  &   &   \\
                                   & \geq & -\mu \\
                                   & &\\
                                   &\geq&c|x+\xi|,
                               \end{array}
\end{equation}
for $x\in[-\xi-d,-\xi-R\epsilon^\frac{2}{3}]$.

It follows from Lemma 3.1 in \cite{clerc2017theory} that
\begin{equation}\label{eqLast3}\left|v_\epsilon(-\xi-R\epsilon^\frac{2}{3})\right|\leq C\epsilon^\frac{1}{3}.\end{equation}
On the other side, it follows as in Proposition 2.1 in \cite{ignat2006critical} that
\begin{equation}\label{eqLast4}
\varphi_\epsilon(-\xi-d)=O(\epsilon^2)\ \  \textrm{as}\ \ \epsilon\to 0.
\end{equation}
In light of (\ref{eqLast1}), (\ref{eqLast2}), (\ref{eqLast3}), (\ref{eqLast4}), we deduce by a standard barrier argument that
\[
\left|\varphi_\epsilon(x)\right|\leq C\epsilon^\frac{1}{3}e^{c\left\{\frac{x+\xi}{\epsilon^\frac{2}{3}}\right\}}+C\epsilon^2+C\frac{\epsilon^3}{|x+\xi|^4},\ \ x\in[-\xi-d,-\xi-R\epsilon^\frac{2}{3}].
\]
By combining (\ref{eqLast0}), (\ref{eqLastDef}) and the above relation, we arrive at
\[
v_\epsilon(x)=-\epsilon a \frac{f(x)}{\mu(x)}\left[1+O\left(\frac{\epsilon^2}{|x+\xi|^3} \right)+O\left(\frac{|x+\xi|}{\epsilon^\frac{2}{3}}e^{c\left\{\frac{x+\xi}{\epsilon^\frac{2}{3}}\right\}}\right)+O\left(\epsilon|x+\xi|\right)\right],
\]
uniformly on $[-\xi-d,-\xi-R\epsilon^\frac{2}{3}]$, as $\epsilon\to 0$.
Assuming that (\ref{eqBaggio}) does not hold, then we can evaluate  the above relation  at $x=\rho_\epsilon$ along a contradicting sequence of $\epsilon$'s that tend to zero such that
\[
\frac{\rho_\epsilon+\xi}{\epsilon^\frac{2}{3}}\to -\infty.
\]
This gives us that
\[
 1+O\left(\frac{\epsilon^2}{|\rho_\epsilon+\xi|^3} \right)+O\left(e^{c\left\{\frac{\rho_\epsilon+\xi}{\epsilon^\frac{2}{3}}\right\}}\right)+o\left(\epsilon\right) =0,
\]
which is clearly  a contradiction. We have thus established the desired lower bound in (\ref{eqBaggio}).

The main effort will  be to establish the 'opposite direction' of (\ref{eqBaggio}). To this end, we will argue by contradiction. Assuming  that the assertion of the theorem is false would give  us a sequence   $\epsilon_n\to 0$ such that
\begin{equation}\label{eqcontra}
    \frac{\rho_{\epsilon_n}+\xi}{\epsilon_n^\frac{2}{3}}\to +\infty.
\end{equation}

Abusing notation, we will drop from now on the subscript $n$ and assume that all the following $\epsilon's$ are along this sequence or a subsequence of it.

Let $\eta_\epsilon$ be the unique negative solution of
\[\left\{\begin{array}{c}
                                     \epsilon^2 \eta''(x)+\mu(x)\eta(x)-\eta^3(x)+\epsilon a f=0,\  x<0; \\
                                     \eta(-\infty)=0,\  \eta'(0)=0.
                                   \end{array} \right.
\]
In passing, we note that uniqueness follows as in Remark 9 in \cite{sourdisuniqueness}. For future purposes, we point out that the method of the aforementioned reference yields that
\begin{equation}\label{eqFractal}
  \frac{v_\epsilon(x)}{\eta_\epsilon(x)}<1,\ \ x<\rho_\epsilon.
\end{equation}

Fine estimates for the convergence $\eta_\epsilon\to -\sqrt{\mu^+}$ uniformly as $\epsilon \to 0$ are available from \cite{karali2015ground} (because the positive solution of (\ref{eqPainle}) is nondegenerate, as we have remarked). In particular, $ \eta_\epsilon$ satisfies (\ref{eqSatur}) with $Y=Y_+$.
Essentially we will only need that there exist constants $C,\delta>0$ such that
\begin{equation}\label{eqSimulation}\begin{array}{c}
                                      -C\sqrt{ x+\xi}\leq\eta_\epsilon(x) \leq -\frac{1}{C}\sqrt{x+\xi}, \ \ \epsilon^\frac{2}{3} \leq x+\xi\leq \delta, \\
                                       \\
                                      \eta_\epsilon(x) \sim -\sqrt{\mu'(-\xi)(x+\xi)}, \ \ \epsilon^\frac{2}{3} \ll x+\xi\leq \delta,
                                    \end{array}
\end{equation}
  and that the above relations  can be differentiated with respect to $x$ in the obvious way
(see also Appendix A in \cite{ANS}). For a bit more refined estimates  we refer to Remark \ref{remFormal} below.

Using a well known  trick from \cite{lassoued1999ginzburg}, suppressing the dependence on $\epsilon$, we write
\begin{equation}\label{eqMiro}
  v=\eta w,
\end{equation}
for some $w$ such that
\begin{equation}\label{eqwsign}
  w>0 \ \textrm{for}\ x<\rho;\ \ w<0 \ \textrm{for}\ x>\rho.
\end{equation}
For future purposes, let us note that the upper bound
\[
|v(x)|\leq C\left(\sqrt{x+\xi}+\epsilon^{\frac{1}{3}} \right), \ \ x\in (-\xi, 0),
\]
which follows at once from Lemma 3.1 in \cite{clerc2017theory}, together with the identical lower bound which follows from (\ref{eqSimulation})
imply that
\begin{equation}\label{eqwlower}
|w(x)|\leq C, \ \ x\in (-\xi,0].
\end{equation}

We find that $w$ satisfies
\begin{equation}\label{eqWstr}
 \epsilon^2  w''+2\epsilon^2  \frac{\eta'}{\eta}w'+\eta^2\left(w-w^3 \right)+\epsilon a \frac{f(x)}{\eta}(1-w)=0, \ \ x<0.
\end{equation}
Next, we stretch variables around $x=\rho$, setting
\[
W(y)=w(x)     \ \textrm{where}\ y=\frac{x-\rho}{\tilde{\epsilon}}
\]
with   $\tilde{\epsilon}=\tilde{\epsilon}(\epsilon)>0$ to be determined such that
\begin{equation}\label{eqepsil}
  \tilde{\epsilon}\ll \epsilon^\frac{2}{3}.
\end{equation} We will denote with $\dot{}$ the derivative with respect to $y$.
 We obtain from (\ref{eqWstr}) that
\begin{equation}\label{eqWnew}
\begin{split}
 \ddot{W}+2 \tilde{\epsilon} \frac{\eta'}{\eta}\left(\rho+\tilde{\epsilon}y\right)\dot{W}  +\frac{\tilde{\epsilon}^2}{\epsilon^2}\eta^2\left(\rho+\tilde{\epsilon}y\right)\left(W-W^3 \right)\ \ \ \ \ \ \ \ \ \ \ \ \ \ \ \ \ \ \ \ \ \ &\\
 \ \ \ \ \ \ \ \ \ \ \ \
 +a\frac{\tilde{\epsilon}^2}{\epsilon}  \frac{f}{\eta}\left(\rho+\tilde{\epsilon}y\right)(1-W)=0&
\end{split}
\end{equation}
for $y<-\rho/\tilde{\epsilon}$.
In light of     (\ref{eqcontra}),  (\ref{eqSimulation})  and the working assumption (\ref{eqepsil}), it holds
\begin{equation}\label{eqSim2}\frac{\tilde{\epsilon}^2}{\epsilon^2}\eta^2(\rho+\tilde{\epsilon}y)\sim \mu'(-\xi)\frac{\tilde{\epsilon}^2}{\epsilon^2} (\rho+\xi)\ \ \textrm{as}\ \epsilon\to 0
\end{equation}
(for fixed $y$).
Therefore, we choose
\begin{equation}\label{eqEpsChoice}
  \tilde{\epsilon}=\frac{\epsilon}{\sqrt{\rho+\xi}}.
\end{equation}
Recalling (\ref{eqcontra}), this choice clearly satisfies (\ref{eqepsil}). The point is that we want the third term in (\ref{eqWnew}) to be of the same order as the first one; the other terms will turn out to be of smaller order. Then, equation (\ref{eqWnew}) becomes
\begin{equation}\label{eqWnew2}
\begin{split}
 \ddot{W}+2 \frac{\epsilon}{\sqrt{\rho+\xi}} \frac{\eta'}{\eta}\left(\rho+\tilde{\epsilon}y\right)\dot{W}  +\frac{\eta^2\left(\rho+\tilde{\epsilon}y\right)}{\rho+\xi}\left(W-W^3 \right)\ \ \ \ \ \ \ \ \ \ \ \ \ \ \ \ \ \ \ \ \ \ &\\
 \ \ \ \ \ \ \ \ \ \ \ \
 +a\frac{\epsilon}{\rho+\xi}  \frac{f}{\eta}\left(\rho+\tilde{\epsilon}y\right)(1-W)=0&
\end{split}
\end{equation}

By virtue of (\ref{eqwlower}), we have that $W$ is bounded locally with respect to $\epsilon$.
Hence, using standard elliptic estimates and the usual diagonal argument, keeping in mind (\ref{eqwsign}), passing to a further subsequence if necessary, we find that
\begin{equation}\label{eqCCloca}
  W(y)\to W_0(y)\ \textrm{in}\  C^1_{loc}(\mathbb{R}) \ \textrm{as} \ \epsilon \to 0,
\end{equation}
where $W_0$ satisfies
\[
\frac{1}{\mu'(-\xi)}\ddot{W}_0+W_0-W_0^3=0,\  \ y\in \mathbb{R};
\ \
 yW_0(y)\leq 0,\ \ y\in \mathbb{R}.
\]

We claim that $W_0$ is nontrivial, which would imply that
\begin{equation}\label{eqHyper}
  W_0(y)\equiv -\tanh\left(\sqrt{\frac{\mu'(-\xi)}{2}}y \right).
\end{equation} In passing, we note that the explicit formula for $W_0$ is not important for our purposes, we will essentially use that
\begin{equation}\label{eqEfi}
  W_0(y)\to \mp 1\ \ \textrm{as}\ \ y\to \pm\infty.
\end{equation}
To this end, let $Z_\epsilon$ be the unique positive solution of the following boundary value problem:
\[
 \ddot{Z}+2 \frac{\epsilon}{\sqrt{\rho+\xi}} \frac{\eta'}{\eta}\left(\rho+\tilde{\epsilon}y\right)\dot{Z}  +\frac{\eta^2\left(\rho+\tilde{\epsilon}y\right)}{\rho+\xi}\left(Z-Z^3 \right)=0,\ \ y\in \left(\frac{-\xi+\epsilon^\frac{2}{3}-\rho}{\tilde{\epsilon}} ,0\right);
\]
\[
Z\left(\frac{-\xi+\epsilon^\frac{2}{3}-\rho}{\tilde{\epsilon}} \right)=\frac{1}{4},\ \ \ Z(0)=0.
\]As will become apparent  shortly, the specific value $1/4$ is not of importance.
The existence of such a $Z_\epsilon$ follows by directly minimizing the associated energy functional
\begin{equation}\label{eqEepsil}
  E_\epsilon(Z)=\int_{(-\xi+\epsilon^\frac{2}{3}-\rho)\tilde{\epsilon}^{-1}}^{0}\left\{\frac{\eta^2\left(\rho+\tilde{\epsilon}y\right)}{2}(\dot{Z})^2+
\frac{\eta^4\left(\rho+\tilde{\epsilon}y\right)}{\rho+\xi}\frac{\left(1-Z^2\right)^2}{4} \right\}dy
\end{equation}
subject to the above boundary conditions. Moreover, simple energy considerations give that $0<Z_\epsilon<1$.
The uniqueness comes from the observation that   $\{tZ_\epsilon \ :\  t\in (0,1)\}$  is a family of lower solutions to the above boundary value problem and Serrin's sweeping principle (see \cite{sourdisuniqueness} and the references therein). In fact, by virtue of (\ref{eqAssump}), we note that these are also lower solutions to the equation (\ref{eqWnew2}) in the same interval. As a consequence of the assumption (\ref{eqcontra}), the blow-up analysis in \cite{clerc2017theory} yields that $ \eta_\epsilon$ and $v_\epsilon$ share the same first order term in their corresponding inner expansions around $-\xi$. Hence, both $\eta_\epsilon$ and $v_\epsilon$ satisfy (\ref{eqSatur}) with $Y=-Y_+$.
In particular, given any $L>0$,  it holds
\begin{equation}\label{eqInfinity}
  W_\epsilon\left(\frac{-\xi-\rho+L\epsilon^\frac{2}{3}}{\tilde{\epsilon}}\right)\to 1 \ \ \textrm{as}\ \ \epsilon\to 0.
\end{equation}
Thus, since  $W_\epsilon\left((-\xi+\epsilon^\frac{2}{3}-\rho)\tilde{\epsilon}^{-1} \right)>1/2$, $W_\epsilon(0)=0$ and  $W_\epsilon>0$ in between, we deduce by Serrin's sweeping principle that
\[
W_\epsilon(y)>Z_\epsilon(y),\ \ y\in \left((-\xi+\epsilon^\frac{2}{3}-\rho)\tilde{\epsilon}^{-1},0 \right).
\]
On the other hand, as in Proposition 2.3 in \cite{nakashima2003multi}, we have
\[
Z_\epsilon\to -\tanh\left(\sqrt{\frac{\mu'(-\xi)}{2}}y \right)\ \ \textrm{in}\ \ C^1_{loc}\left((-\infty,0]\right)\ \ \textrm{as}\ \ \epsilon\to 0.
\]
The above two relations imply that $W_0$ is nontrivial, and thus is given by (\ref{eqHyper})  as claimed.


Let us consider now (\ref{eqWnew2}) without derivatives, which recalling (\ref{eqEpsChoice}) and after dividing by $1-W$ becomes
\[
\Sigma^2+\Sigma+a\epsilon \frac{f}{\eta^3}(\rho+\tilde{\epsilon}y)=0.
\]
The above algebraic equation can be solved explicitly and has the following two  solutions:
\[
\Sigma_\pm(y)=\frac{-1\pm \sqrt{1-4a\epsilon \frac{f}{\eta^3}(\rho+\tilde{\epsilon}y)}}{2}.
\]
It follows from (\ref{eqcontra}) and (\ref{eqSimulation}) that there is some fixed large $D>0$ such that these are real valued  for  
\[-\xi+ D\epsilon^\frac{2}{3}\leq \rho+\tilde{\epsilon}y\leq \delta,\] provided
that $\epsilon>0$ is sufficiently small.
From now on let us fix a $D$ such that
\begin{equation}\label{eqSigmata}
  0<1+\Sigma_-(y) <\frac{1}{100}\ \ \textrm{and}\ \  -\frac{1}{100}<\Sigma_+(y) <0
\end{equation}
in the aforementioned interval, for sufficiently small $\varepsilon>0$ (the number 100 has no significance here).

Let
\begin{equation}\label{eqPhidefi}
  \Phi(y)=1-W(y),\ \ y\in  \left[D\sqrt{\rho+\xi}\epsilon^{-\frac{1}{3}}-\frac{(\rho+\xi)^\frac{3}{2}}{\epsilon},0\right].
\end{equation}
We know that \begin{equation}\label{eqphiPosu}
               0<\Phi<1
             \end{equation}
(recall (\ref{eqFractal})).
We can write (\ref{eqWnew2}) as
\begin{equation}\label{eq1001}
  -\ddot{\Phi}-2\frac{\epsilon}{\sqrt{\rho+\xi}}\frac{\eta'}{\eta}\left(\rho+\tilde{\epsilon}y\right)\dot{\Phi}+\frac{\eta^2(\rho+\tilde{\epsilon}y)}{\rho+\xi}
  \left(W-\Sigma_-(y) \right)\left(W-\Sigma_+(y) \right)\Phi=0.
\end{equation}
By virtue of (\ref{eqCCloca}), (\ref{eqEfi}), (\ref{eqInfinity}) with $L=D$,  and (\ref{eqSigmata}), we deduce from (\ref{eqWnew2}) via the maximum principle that there exists an $M>0$ such that\begin{equation}\label{eqGeq12}
  0<1-W(y)<\frac{1}{100},\ \ y\in  \left[D\sqrt{\rho+\xi}\epsilon^{-\frac{1}{3}}-\frac{(\rho+\xi)^\frac{3}{2}}{\epsilon},-M\right],
\end{equation}
provided that $\epsilon>0$ is sufficiently small. More precisely, we observe that $\underline{W}_t=1-t$, $\frac{1}{100}<t\leq1$, is a family of lower solutions to (\ref{eqWnew2}) which allows  to use Serrin's sweeping principle since $W>0$, see \cite{sourdisuniqueness} and the references therein. Concerning the second term in (\ref{eq1001}), it follows from (\ref{eqSimulation}) that
\begin{equation}\label{eqMiddle}
  0<\frac{\epsilon}{\sqrt{\rho+\xi}}\frac{\eta'}{\eta}\left(\rho+\tilde{\epsilon}y\right)\leq C \frac{\epsilon^\frac{1}{3}}{\sqrt{\rho+\xi}},\ \ y\in\left[D\sqrt{\rho+\xi}\epsilon^{-\frac{1}{3}}-\frac{(\rho+\xi)^\frac{3}{2}}{\epsilon},0\right];
\end{equation}
  concerning the last term, we obtain from (\ref{eqSimulation}) and (\ref{eqGeq12}) that
\begin{equation}\label{eqStrong}
C\geq \frac{\eta^2(\rho+\tilde{\epsilon}y)}{\rho+\xi}
  \left(W-\Sigma_-  \right)\left(W-\Sigma_+  \right)\geq \frac{1}{C},\ \ y\in\left[-\frac{2(\rho+\xi)^\frac{3}{2}}{3\epsilon},-M\right].
\end{equation}
  So, by a standard barrier argument, we deduce from   (\ref{eqphiPosu}), (\ref{eq1001}), (\ref{eqMiddle}) and the above relation that
\[
0<  \Phi(y)\leq  e^{cy}+e^{-c\left(y+\frac{2(\rho+\xi)^\frac{3}{2}}{3\epsilon} \right)},\ \  y\in\left[-\frac{2(\rho+\xi)^\frac{3}{2}}{3\epsilon},-M\right].
\]
Consequently, by recalling the definition of $\Phi$ from (\ref{eqPhidefi}),   we infer that
\begin{equation}\label{eqPhiOuttyStrongest}
 0<1-W(y)\leq   e^{cy}+ e^{-c\left\{\frac{\rho+\xi}{\epsilon^\frac{2}{3}}\right\}}   ,\ \  y\in\left[-\frac{(\rho+\xi)^\frac{3}{2}}{2\epsilon}-1,0\right].
\end{equation}
In turn, by standard elliptic estimates, via (\ref{eq1001}),   (\ref{eqMiddle}) and the upper bound in (\ref{eqStrong}),  we get that
\begin{equation}\label{eqPhiOuttyStrongestDeriva}
 \left|\dot{W}(y)\right|\leq C e^{cy}+Ce^{-c\left\{\frac{\rho+\xi}{\epsilon^\frac{2}{3}}\right\}} ,\ \  y\in\left[-\frac{(\rho+\xi)^\frac{3}{2}}{2\epsilon},0\right].
\end{equation}

Let us    write (\ref{eqWnew2}) as
\begin{equation}\label{eqWnew2est}
 \ddot{W}+2\frac{\epsilon}{\sqrt{\rho+\xi}}\frac{\eta'}{\eta}\left(\rho+\tilde{\epsilon}y\right)\dot{W}+\frac{\partial}{\partial W}G(W,y)=0,  \end{equation}
with
\begin{equation}\label{eqG}
  G(W,y)=-\frac{\eta^2(\rho+\tilde{\epsilon}y)}{\rho+\xi}\frac{\left(1-W^2 \right)^2}{4} -a\frac{\epsilon}{\rho+\xi}  \frac{f}{\eta}\left(\rho+\tilde{\epsilon}y\right)\frac{(1-W)^2}{2}.
\end{equation}
When multiplied by  $\dot{W}$,  equation (\ref{eqWnew2est}) reads as
\begin{equation}\label{eqBasic}
\begin{split}
\frac{d}{dy}\left(\frac{(\dot{W})^2}{2} \right)+2\frac{\epsilon}{\sqrt{\rho+\xi}}\frac{\eta'}{\eta}\left(\rho+\tilde{\epsilon}y\right)(\dot{W})^2+\frac{d}{dy}\left(G(W,y)\right)  \ \ \ \ \ \ \ \ \ \ \ \ \ \ \ \ \ \ \ \ \ \ &\\
 \ \ \  + \frac{\epsilon}{(\rho+\xi)^\frac{3}{2}}\eta\eta'\left(\rho+\tilde{\epsilon}y\right)\frac{\left(1-W^2 \right)^2}{2}\ \ \ \ \ \ \ \ \ \ \  \ \ \ \ \ \ \ \ \ \ \ \ \ \ \ \ \ \ \ \ \ \   \  & \\
 -a\frac{\epsilon^2}{(\rho+\xi)^\frac{3}{2}}  \frac{f\eta'}{\eta^2}\left(\rho+\tilde{\epsilon}y\right)\frac{(1-W)^2}{2}+a\frac{\epsilon^2}{(\rho+\xi)^\frac{3}{2}}  \frac{f'}{\eta}\left(\rho+\tilde{\epsilon}y\right)\frac{(1-W)^2}{2}=0. \ \ \ \ \  \ \  \ \ \ \ \ \ \ \ &
  \end{split}
\end{equation}

We shall next integrate the above relation over \[y\in I=\left(-\frac{(\rho+\xi)^\frac{3}{2}}{2\epsilon},(\delta-\rho)\frac{\sqrt{\rho+\xi}}{\epsilon}\right)=(\alpha,\beta).\]
The following fact will be useful in the sequel. By the definition  (\ref{eqMiro}) of $w$, and standard estimates for $v$ and $\eta$ that hold uniformly away from their corner layer at $-\xi$ (see Proposition 2.1 in \cite{ignat2006critical}), we infer that
  \begin{equation}\label{eqlokatzis}
    \|w(x)-\Sigma_-(\frac{x-\rho}{\tilde{\epsilon}})\|_{C^1\left[-\xi+\frac{\delta}{2},-\xi+2\delta\right]}\leq C\epsilon^2.
  \end{equation}
The integral of the first term in (\ref{eqBasic}) is plainly the half of
\[\begin{array}{rcl}
                                    (\dot{W})^2\left(\beta \right) -(\dot{W})^2\left(\alpha \right) & = & \frac{\epsilon^2}{\rho+\xi}  (w')^2\left(-\xi+\delta  \right)-(\dot{W})^2\left(-\frac{(\rho+\xi)^\frac{3}{2}}{2\epsilon} \right) \\
                                      &   &   \\
                                      & = & O\left(
\frac{\epsilon^4}{\rho+\xi}\right)+O\left(\left(
\frac{\epsilon^\frac{2}{3}}{\rho+\xi}\right)^{10}\right) \\
                                     &  &  \\
                                     & = &O\left(\left(
\frac{\epsilon^\frac{2}{3}}{\rho+\xi}\right)^{6}\right),
                                  \end{array}
\]
where we used (\ref{eqPhiOuttyStrongestDeriva}),  and (\ref{eqlokatzis}) which implies that  $w'\left(-\xi+\delta\right)=O(\epsilon)$ (recall also the formula for $\Sigma_-$).
We point out in passing that the rough estimate $w'(-\xi+\delta)=O(1)$ would have actually sufficed for our purposes. Regarding the second term in (\ref{eqBasic}), the fact that it is nonnegative will suffice for the time being.
Using (\ref{eqPhiOuttyStrongest}) and (\ref{eqlokatzis}), we find from the definition of $G$ in (\ref{eqG}) that  the integral of the third term in (\ref{eqBasic}) can be estimated as follows:
\[
\int_{I}^{}\frac{d}{dy}\left(G(W,y)\right)dy=G\left(W(\beta),\beta \right)-G\left(W(\alpha),\alpha \right)  =O\left(
\frac{\epsilon}{\rho+\xi}\right).
\]
Concerning the     fourth term in (\ref{eqBasic}),
thanks to (\ref{eqSimulation}), we find that
\[
\int_{I}^{}\eta\eta'\left(\rho+\tilde{\epsilon}y\right)\frac{\left(1-W^2 \right)^2}{2}dy\geq c\int_{I}^{} \left(1-W^2 \right)^2 dy.
\]
Concerning the fifth term in (\ref{eqBasic}), recalling (\ref{eqwlower})  and using once more (\ref{eqSimulation}),   we have
\begin{equation}\label{eqBigThree}\begin{array}{rcl}
    \left|\int_{I}^{}  \frac{f\eta'}{\eta^2}\left(\rho+\tilde{\epsilon}y\right)\frac{(1-W)^2}{2}dy\right| & \leq &  C \int_{I}^{ }  \left(\rho+\xi+\tilde{\epsilon}y\right)^{-\frac{3}{2}}dy \\
      &   &   \\
      & \leq & C\tilde{\epsilon}^{-1} (\rho+\xi)^{-\frac{1}{2}}   \\
      &   &   \\
      & \leq  & \frac{C}{\epsilon}.
  \end{array}
 \end{equation}
Finally, concerning the last term in (\ref{eqBasic}), we obtain similarly that
\[
\left|\int_{I}^{}\frac{f'}{\eta}\left(\rho+\tilde{\epsilon}y\right)\frac{(1-W)^2}{2}dy\right|\leq \frac{C}{\epsilon}.
\]
Collecting all of the above, we infer from the integration of (\ref{eqBasic}) over $I$ that
\begin{equation}\label{eqL222}
  \int_{I}^{} \left(1-W^2 \right)^2 dy\leq C.
\end{equation}

We claim that (\ref{eqL222}) implies that there exist $c,N>0$ such that
\begin{equation}\label{eqClaimaraTelos}
  W(y)\leq-c,\ \ \ y\in \left[N,5\frac{(\rho+\xi)^\frac{3}{2}}{\epsilon} \right],
\end{equation}if $\epsilon>0$ is sufficiently small.
Indeed, if not, then there would exist points \begin{equation}\label{eqCrucial}p_\epsilon\in\left(0, 5\frac{(\rho+\xi)^\frac{3}{2}}{\epsilon}\right]\ \
\textrm{with}\ \ p_\epsilon\to +\infty
\end{equation} such that
\[
W(p_\epsilon)\to 0.
\]
Let
\[
\tilde{W}(y)=W(y+p).
\]
Then, we have
\begin{equation}\label{eqWnew2!!!}
\begin{split}
 \ddot{\tilde{W}}+2 \frac{\epsilon}{\sqrt{\rho+\xi}} \frac{\eta'}{\eta}\left(\rho+\tilde{\epsilon}p+\tilde{\epsilon}y\right)\dot{\tilde{W}}  +\frac{\eta^2\left(\rho+\tilde{\epsilon}p+\tilde{\epsilon}y\right)}{\rho+\xi}\left(\tilde{W}-\tilde{W}^3 \right)\ \ \ \   \ \ \ \ \ \ &\\
 \ \ \ \ \ \
 +a\frac{\epsilon}{\rho+\xi}  \frac{f}{\eta}\left(\rho+\tilde{\epsilon}p+\tilde{\epsilon}y\right)(1-\tilde{W})=0&
\end{split}
\end{equation}
(at least) for $y\in\left[-1,100\frac{(\rho+\xi)^\frac{3}{2}}{\epsilon} \right]$, $\tilde{W}< 0$ therein, and
$
\tilde{W}(0)\to 0.
$
Then, since $\tilde{\epsilon}p\to 0$, we can argue as we did for   (\ref{eqCCloca}) to find that
$
\tilde{W} \to \tilde{W}_0$  in  $C^1_{loc}\left([-1,\infty)\right),
$
where
\[
\frac{1}{\mu'(-\xi)}\ddot{\tilde{W}}_0+\tilde{W}_0-\tilde{W}_0^3=0,\  \ y>-1;
\ \
 \tilde{W}_0(y)\leq 0,\ \ y>-1,\ \ \tilde{W}_0(0)=0.
\]
It is clear that $\tilde{W}_0\equiv 0$. So, given any $K>1$, it holds
\[
\int_{I}^{}\left(1-W^2 \right)^2 dy\geq\int_{p}^{p+K}\left(1-W^2 \right)^2 dy=\int_{0}^{K}\left(1-\tilde{W}^2 \right)^2 dy\geq \frac{K}{2},
\]
provided that $\epsilon>0$ is sufficiently small, which contradicts (\ref{eqL222}) and establishes the validity of (\ref{eqClaimaraTelos}).

The estimate (\ref{eqL222}) yields that
\[
\int_{5\frac{(\rho+\xi)^\frac{3}{2}}{\epsilon}}^{10\frac{(\rho+\xi)^\frac{3}{2}}{\epsilon}}\left(1-W^2 \right)^2 dy\leq C,
\]
which implies that
\begin{equation}\label{eqclaimara3333}
\left|-1-W(q)\right|<\frac{1}{100}\ \ \textrm{for\ some}\ \ q\in \left(5\frac{(\rho+\xi)^\frac{3}{2}}{\epsilon},10\frac{(\rho+\xi)^\frac{3}{2}}{\epsilon}\right).
\end{equation}

We may further assume that the  $N$ in (\ref{eqClaimaraTelos}) is   such that \[-1<W_0(N)<-1+\frac{1}{1000},\] (recall (\ref{eqEfi})).
Then, thanks to (\ref{eqCCloca}), (\ref{eqSigmata}) and (\ref{eqclaimara3333}), as we did for (\ref{eqGeq12}) we have that
\[
W(y)>-1- \frac{1}{100},\ \ y\in\left[N, 5\frac{(\rho+\xi)^\frac{3}{2}}{\epsilon}\right].
\]
On the other side, the relations (\ref{eqClaimaraTelos}) and (\ref{eqclaimara3333}) allow us to apply the same argument in the opposite direction, and thus arrive at
\begin{equation}\label{eqFinaleee}
\left|W(y)+1\right|<\frac{1}{100},\ \ y\in\left[N, 5\frac{(\rho+\xi)^\frac{3}{2}}{\epsilon}\right].
\end{equation}

Let now
\[
  \Psi(y)=W(y)-\Sigma_-(y),\ \ y\in  \left[0, 5\frac{(\rho+\xi)^\frac{3}{2}}{\epsilon}\right].
\]
 We can write (\ref{eqWnew2}) as
\begin{equation}\label{eq1002}
\begin{split}
  \ddot{\Psi}+2\frac{\epsilon}{\sqrt{\rho+\xi}}\frac{\eta'}{\eta}\left(\rho+\tilde{\epsilon}y\right)\dot{\Psi}+\frac{\eta^2(\rho+\tilde{\epsilon}y)}{\rho+\xi}
  \left(1-W  \right)\left(W-\Sigma_+(y) \right)\Psi=\ \ \ \ \ \ \ \ \ \ \ \ \ \ \ \ & \ \ \ \ \ \ \ \ \ \ \ \ \ \ \ \  \\ \ \ \ \ \ \ \ \ \ \ \ \ \ \ \ \ \ \ \
 \ \ \ \ \ \ \ \ \ \       -\ddot{\Sigma}_--2\frac{\epsilon}{\sqrt{\rho+\xi}}\frac{\eta'}{\eta}\left(\rho+\tilde{\epsilon}y\right)\dot{\Sigma}_-.&
\end{split}
 \end{equation}
 We note that
\begin{equation}\label{eqs1}
\left|-1-\Sigma_-(y)\right|\leq C\epsilon (\rho+\xi+\tilde{\epsilon}y)^{-\frac{3}{2}},
\end{equation}
\begin{equation}\label{eqs2}
  \left|\dot{\Sigma}_-(y)\right|\leq C\frac{\epsilon^2}{\sqrt{\rho+\xi}} (\rho+\xi+\tilde{\epsilon}y)^{-\frac{5}{2}},\ \
\left|\ddot{\Sigma}_-(y)\right|\leq C\frac{\epsilon^3}{\rho+\xi} (\rho+\xi+\tilde{\epsilon}y)^{-\frac{7}{2}},
\end{equation}
for $y\in \left[0,5\frac{(\rho+\xi)^\frac{3}{2}}{\epsilon}\right]$.
Thus,  since
\begin{equation}\label{eqMiddleNew}
  0<\frac{\eta'}{\eta}\left(\rho+\tilde{\epsilon}y\right)\leq \frac{C}{\rho+\xi},\ \ y\in \left[-\frac{(\rho+\xi)^\frac{3}{2}}{2\epsilon},5\frac{(\rho+\xi)^\frac{3}{2}}{\epsilon}\right],
\end{equation}
  equation (\ref{eq1002}) becomes
\begin{equation}\label{eq1003}
\begin{split}
  \ddot{\Psi}+2\frac{\epsilon}{\sqrt{\rho+\xi}}\frac{\eta'}{\eta}\left(\rho+\tilde{\epsilon}y\right)\dot{\Psi}+\frac{\eta^2(\rho+\tilde{\epsilon}y)}{\rho+\xi}
  \left(1-W  \right)\left(W-\Sigma_+(y) \right)\Psi=\ \ \ \ \ \ \ \ \ \ \ \ \ &\\ \ \ \ \ \ \ \  \ O\left(\frac{\epsilon^3}{\rho+\xi} (\rho+\xi+\tilde{\epsilon}y)^{-\frac{7}{2}}\right)&,\end{split}
 \end{equation}
for $y\in  \left[0, 5\frac{(\rho+\xi)^\frac{3}{2}}{\epsilon}\right]$.
By virtue of (\ref{eqFinaleee}), as we did previously for $\Phi$ (also keep in mind (\ref{eqwlower})), we get that
\[
\left|W-\Sigma_-\right|\leq C\frac{\epsilon^3}{(\rho+\xi)^{\frac{9}{2}}} +Ce^{-cy}+Ce^{c\left(y-5\frac{(\rho+\xi)^\frac{3}{2}}{\epsilon} \right)},\ \ y\in \left[0, 5\frac{(\rho+\xi)^\frac{3}{2}}{\epsilon}\right].
\]
Thus, it holds
\begin{equation}\label{eq11}
\left|W-\Sigma_-\right|\leq C\frac{\epsilon^3}{(\rho+\xi)^{\frac{9}{2}}} +Ce^{-cy},\ \ y\in \left[0, 4\frac{(\rho+\xi)^\frac{3}{2}}{\epsilon}\right].
\end{equation}
In turn, by using the upper bound\[
\frac{\eta^2(\rho+\tilde{\epsilon}y)}{\rho+\xi}\leq C,\ \ y\leq 5\frac{(\rho+\xi)^\frac{3}{2}}{\epsilon},
\]
we deduce from  (\ref{eqMiddleNew}), (\ref{eq1003}), (\ref{eq11}), and standard elliptic estimates that
\begin{equation}\label{eqAboveG}
\left| \dot{W}-\dot{\Sigma}_-\right|\leq C\frac{\epsilon^3}{(\rho+\xi)^\frac{9}{2}} +Ce^{-cy}, \ \ y\in \left[0,\frac{(\rho+\xi)^{\frac{3}{2}}}{\epsilon } \right].\end{equation}
By combining (\ref{eqs1}) and (\ref{eq11}), we find that
\begin{equation}\label{eqW+1}
\left|W+1\right|\leq C\frac{\epsilon }{(\rho+\xi)^{\frac{3}{2}}} +Ce^{-cy} ,\ \ y\in \left[0, \frac{(\rho+\xi)^\frac{3}{2}}{\epsilon}\right].
\end{equation}
By combining (\ref{eqs2}) and (\ref{eqAboveG}), we get that
\begin{equation}\label{eqW+1Deriva}
\left| \dot{W}\right|\leq C\frac{\epsilon^2}{(\rho+\xi)^3} +Ce^{-cy}, \ \ y\in \left[0, \frac{(\rho+\xi)^\frac{3}{2}}{\epsilon}\right].\end{equation}

We shall integrate the above relation over \[y\in J=\left(-\frac{(\rho+\xi)^\frac{3}{2}}{N\epsilon},\frac{(\rho+\xi)^\frac{3}{2}}{N\epsilon}\right)=(\tilde{\alpha},\tilde{\beta}),\]with $N$ to be chosen sufficiently large but independent of $\epsilon$. Below we will carefully estimate   each term in the resulting relation.
Thanks to (\ref{eqPhiOuttyStrongestDeriva}) and (\ref{eqW+1Deriva}), in regards to the first term in (\ref{eqBasic}), we find that
\[
     (\dot{W})^2\left(\frac{(\rho+\xi)^\frac{3}{2}}{N\epsilon} \right) -(\dot{W})^2\left(-\frac{(\rho+\xi)^\frac{3}{2}}{N\epsilon} \right)  =
    O\left(\frac{\epsilon^4}{(\rho+\xi)^6}\right) ,
\] as $\epsilon\to 0$.
Regarding the second term in (\ref{eqBasic}), thanks to (\ref{eqSimulation}) and (\ref{eqCCloca}), we have
\[
0<\frac{(\rho+\xi)^\frac{3}{2}}{\epsilon}\frac{\epsilon}{\sqrt{\rho+\xi}}\frac{\eta'}{\eta}\left(\rho+\tilde{\epsilon}y\right)(\dot{W})^2\to \frac{1}{2}(\dot{W}_0)^2\  \textrm{in} \ C_{loc}(\mathbb{R})\ \textrm{as}\ \epsilon\to 0.
\]
Therefore, we obtain from (\ref{eqPhiOuttyStrongestDeriva}),  (\ref{eqMiddleNew}),   (\ref{eqW+1Deriva}), and Lebesgue's dominated convergence theorem that
\begin{equation}\label{eqend1}
\frac{(\rho+\xi)^\frac{3}{2}}{\epsilon}\frac{\epsilon}{\sqrt{\rho+\xi}}\int_{J}^{}\frac{\eta'}{\eta}\left(\rho+\tilde{\epsilon}y\right)(\dot{W})^2dy\to \frac{1}{2}\int_{-\infty}^{\infty}(\dot{W}_0)^2dy.
\end{equation}
Recalling (\ref{eqPhiOuttyStrongest}), (\ref{eqG}) and (\ref{eqW+1}),  the integral of the third term in (\ref{eqBasic}) can be estimated   as follows:
\[\begin{array}{rcl}
    \int_{J}^{}\frac{d}{dy}\left(G(W,y)\right)dy & = & G\left(W(\tilde{\beta}),\tilde{\beta} \right)-G\left(W(\tilde{\alpha}),\tilde{\alpha} \right) \\
      &   &  \\
      & = & \frac{2a}{\sqrt{1+1/N}}\frac{f(-\xi)}{\sqrt{\mu'(-\xi)}}\frac{\epsilon}{(\rho+\xi)^\frac{3}{2}}  \left(1+o(1)\right),
  \end{array}
\]as $\epsilon\to 0$.
Concerning the     fourth term in (\ref{eqBasic}), by working   as we did for (\ref{eqend1}), using (\ref{eqPhiOuttyStrongest}) and (\ref{eqW+1}), we find that
\[
\int_{J}^{}\eta\eta'\left(\rho+\tilde{\epsilon}y\right)\frac{\left(1-W^2 \right)^2}{2}dy\to \frac{\mu'(-\xi)}{4}\int_{-\infty}^{\infty} \left(1-W_0^2 \right)^2 dy=\frac{1}{2}\int_{-\infty}^{\infty}(\dot{W}_0)^2dy.
\]
Concerning the fifth term in (\ref{eqBasic}),  keeping in mind    (\ref{eqSimulation}) and (\ref{eqwlower}),   we have
\begin{equation}\label{eqformalok}
  \begin{array}{rcl}
    \int_{J}^{}  \frac{f\eta'}{\eta^2}\left(\rho+\tilde{\epsilon}y\right)\frac{(1-W)^2}{2}dy & = &   \int_{-\frac{(\rho+\xi)^\frac{3}{2}}{N\epsilon}}^{ \frac{(\rho+\xi)^\frac{3}{2}}{N\epsilon}}  \frac{f\eta'}{\eta^2}\left(\rho+\tilde{\epsilon}y\right)\frac{(1-W)^2}{2}dy \\
      &   &   \\
      & \leq & \frac{C}{(\rho+\xi)^\frac{3}{2}}  \int_{-\frac{(\rho+\xi)^\frac{3}{2}}{N\epsilon}}^{ \frac{(\rho+\xi)^\frac{3}{2}}{N\epsilon}} 1 dy \\
      &   &   \\
      & \leq  & \frac{C}{N\epsilon},
  \end{array}
\end{equation}where the above constant $C$ is independent of both large $N$ and small $\epsilon$.
Concerning the last term in (\ref{eqBasic}), by using (\ref{eqwlower})  and that $-\eta\geq c\sqrt{\rho+\xi}$ in $J$, we get that
\[
\left|\int_{J}^{}\frac{f'}{\eta}\left(\rho+\tilde{\epsilon}y\right)\frac{(1-W)^2}{2}dy\right|\leq C\frac{\rho+\xi}{\epsilon}.
\]
Putting all the above in the integral of (\ref{eqBasic}) over $J$ gives us that
\begin{equation}\label{eqFinaleGGGG}
  \frac{\epsilon}{(\rho+\xi)^\frac{3}{2}}\left|\frac{3}{2}\int_{-\infty}^{\infty}(\dot{W}_0)^2dy+\frac{2a}{\sqrt{1+1/N}}\frac{f(-\xi)}{\sqrt{\mu'(-\xi)}}+o(1) \right|\leq C\frac{\epsilon}{\sqrt{\rho+\xi}}+\frac{C}{N}\frac{\epsilon}{(\rho+\xi)^\frac{3}{2}},
\end{equation}
as $\epsilon\to 0$, where the above constant $C$ is independent of both large $N$ and small $\epsilon$.
By letting first $\epsilon\to 0$ and then $N\to \infty$ in the above relation, we infer that
\[
\frac{3}{2}\int_{-\infty}^{\infty}(\dot{W}_0)^2dy+2a\frac{f(-\xi)}{\sqrt{\mu'(-\xi)}}=0.
\]
We note that
\[
\int_{-\infty}^{\infty}(\dot{W}_0)^2dy=-\frac{\sqrt{\mu'(-\xi)}}{\sqrt{2}}\int_{-\infty}^{\infty}(1-W_0^2)\dot{W}_0dy=\sqrt{2}\sqrt{\mu'(-\xi)}.
\]
So, we have been led to a contradiction unless
\begin{equation}\label{eqaa}a= -\frac{3}{2\sqrt{2}}\frac{\mu'(-\xi)}{f(-\xi)}.\end{equation}
Therefore, to complete the proof, it just remains to exclude the above possibility.
To this end, we will estimate the integral in (\ref{eqBigThree}) over $J\subset I$ from below with, say,  $N=2$ in the definition of the interval
$J$. We find as above that
\[
  \begin{array}{rcl}
    \int_{J}^{}  \frac{f\eta'}{\eta^2}\left(\rho+\tilde{\epsilon}y\right)\frac{(1-W)^2}{2}dy & \geq &   \int_{0}^{ \frac{(\rho+\xi)^\frac{3}{2}}{2\epsilon}}  \frac{f\eta'}{\eta^2}\left(\rho+\tilde{\epsilon}y\right)\frac{(1-W)^2}{2}dy \\
      &   &   \\
      & \geq & \frac{c}{(\rho+\xi)^\frac{3}{2}}  \int_{0}^{\frac{(\rho+\xi)^\frac{3}{2}}{2\epsilon}} 1 dy \\
      &   &   \\
      & \geq  & \frac{c}{\epsilon}.
  \end{array}
\]
Hence, keeping in mind the upper bound in (\ref{eqBigThree}) and that $J\subset  I$, we get that
\[
\epsilon\int_{J}^{}  \frac{f\eta'}{\eta^2}\left(\rho+\tilde{\epsilon}y\right)\frac{(1-W)^2}{2}dy\to c_*
\]
for some $c_*\in (0,\infty)$.
Now, assuming to the contrary that $a$ was as in (\ref{eqaa}), instead of (\ref{eqFinaleGGGG}) we end up with
\[
\frac{\epsilon}{(\rho+\xi)^\frac{3}{2}}c_*= O\left(\frac{\epsilon}{\sqrt{\rho+\xi}}\right)+\frac{o(\epsilon)}{(\rho+\xi)^\frac{3}{2}},
\]
as $\epsilon\to 0$. Clearly we have been led again to a contradiction, which completes the proof of the theorem.

\end{proof}
\begin{rem}\label{remFormal}Let us give the formal argument that led us to the  rigorous  analysis
    following (\ref{eqCCloca}). According to folklore, we write
\[
W(y)=W_0(y)+\hat{\epsilon}W_1(y)+\textrm{higher\ order\ terms},
\]
and try to find $\hat{\epsilon}$ and $W_1$ by plugging this into (\ref{eqWnew2}).

Firstly, we will need the following estimates for $-\eta$ near $-\xi$ that follow in analogy to \cite[Thm. 1.1]{karali2015ground}. It holds
\[
  \eta_\epsilon(x)=\nu_\epsilon(x)+ O\left(\epsilon^2\right)x^{-\frac{5}{2}}, \   x\in (-\xi+\epsilon^\frac{2}{3},-\xi+\delta),
\]
uniformly as $\epsilon\to 0$, for some  fixed small $\delta>0$, where $\nu_\epsilon<0$ solves
the algebraic equation (\ref{eqnueps})
near $-\xi$.
It follows readily that
\[
\nu_\epsilon(x)= -\sqrt{\mu'(-\xi)(x+\xi)}+\frac{af(-\xi)}{2\mu'(-\xi)}\frac{\epsilon}{x+\xi}+O\left(\epsilon\right).
\]

The above together with the relations    (\ref{eqcontra}) and (\ref{eqSimulation})  yield the following:
\[
\begin{split}
   \frac{1}{\rho+\xi}\eta^2\left( \rho+\tilde{\epsilon}y\right)  =  \mu'(-\xi)+\mu'(-\xi)\frac{\epsilon}{(\rho+\xi)^\frac{3}{2}}y-\frac{af(-\xi)}{\sqrt{\mu'(-\xi)}}\frac{\epsilon}{(\rho+\xi)^\frac{3}{2}} &   \\
    +o\left(\frac{\epsilon}{(\rho+\xi)^\frac{3}{2}}\right)(|y|+1),  &
\end{split}
    \]
\[
  \frac{\epsilon }{\sqrt{\rho+\xi}} \frac{\eta'}{\eta}\left(\rho+\tilde{\epsilon}y\right)  =  \frac{\epsilon}{2(\rho+\xi)^\frac{3}{2}} +o\left(\frac{\epsilon}{(\rho+\xi)^\frac{3}{2}}\right)(|y|+1),
    \]
\[
  \frac{\epsilon}{\rho+\xi}  \frac{f}{\eta}\left(\rho+\tilde{\epsilon}y\right)  =  -\frac{f(-\xi)}{\sqrt{\mu'(-\xi)}}\frac{\epsilon}{(\rho+\xi)^\frac{3}{2}} +o\left(\frac{\epsilon}{(\rho+\xi)^\frac{3}{2}}\right)(|y|+1),
    \]
uniformly for $|\tilde{\epsilon}y|\leq \delta$, as $\epsilon\to 0$.



Taking the above into account, we find that
\[
\hat{\epsilon}=\frac{\epsilon}{\rho+\xi}
\]
and
\[\begin{split}
     \ddot{W}_1+(1-3W_0^2)W_1= & -\frac{\mu'(-\xi)}{2}y(W_0-W_0^3)-(\dot{W}_0)^2 \\
       &+\frac{af(-\xi)}{\sqrt{\mu'(-\xi)}}(W_0-W_0^3) +\frac{af(-\xi)}{\sqrt{\mu'(-\xi)}}(1-W_0)
  \end{split}
\]
for $y\in \mathbb{R}$.
However, the last term does not decay to zero as $y\to +\infty$, which is necessary in order to get a solution $W_1$ that decays to zero as $|y|\to \infty$.
\end{rem}
\section{Proof of Corollary \ref{cor3}}
\begin{proof}
  In view of (\ref{eqastar}) and (\ref{eqAlpha}), for $\alpha \in (\alpha_*,0)$ with
  \[
  \alpha_*=\alpha_*(\xi,\mu,f)=\frac{a_*f(-\xi)}{\sqrt{2}\mu'(-\xi)}=\frac{f(-\xi)}{3\mu'(-\xi)}\inf_{x\in (-\xi,0]}\frac{\left(\mu(x)\right)^\frac{3}{2}}{\int_{-\xi}^{x}|f|\sqrt{\mu}},\]our Theorem \ref{thm1} implies the existence of a solution to (\ref{eqPainle}), given by appropriate reflections and modulations of $Y$ from (\ref{eqSatur}), that changes sign exactly once and is energy
  minimizing with respect to compactly supported perturbations. The latter property then implies that this solution also satisfies the asymptotic behaviour (\ref{eqY+}) (see \cite[Sec. 4]{clerc2017theory}).

It thus remains to show that \begin{equation}\label{eqastarInf}
                               \inf_{\left\{(\xi,\mu,f) \ \textrm{as in Thm. \ref{thm1}}\right\}}\alpha_*(\xi,\mu,f)\leq -\frac{1}{2}.
                             \end{equation}
To this end, as was observed in \cite{clerc2017theory}, the special choice $f=-\frac{\mu'}{2}$ yields that $a_*=\sqrt{2}$.
  So, this choice gives  $\alpha_*=-\frac{1}{2}$ which   suffices for the proof of the corollary.
\end{proof}

\bibliographystyle{plain}
\bibliography{biblioaacs}
\end{document}